\newtheorem{theorem}{Theorem}[section]
\newtheorem{proposition}[theorem]{Proposition}
\newtheorem{corollary}[theorem]{Corollary}
\theoremstyle{definition}
\newtheorem{example}[theorem]{Example}
\theoremstyle{remark}
\numberwithin{equation}{section}
\begin{document}

\title[Two-sided multiplication operators on the space of regular operators]
 {Two-sided multiplication operators on the space of regular operators}

\author[J. X. Chen]
{Jin Xi Chen }

\address{Department of Mathematics, Southwest Jiaotong
University, Chengdu 610031, PR China}
 \email{jinxichen@home.swjtu.edu.cn}

\author[A. R. Schep]
{Anton R. Schep}
\address{Department of Mathematics, University of South Carolina, Columbia, SC 29208}
\email{schep@math.sc.edu}

\thanks{ The first author  was  supported in part by China Scholarship Council (CSC) and was visiting the University of South Carolina when this work was completed.}

\subjclass[2010]{Primary 46A40; Secondary 46B42, 47B65}

\keywords{regular operator, two-sided multiplication operator, Riesz space, Banach lattice}

\begin{abstract}
Let $W$, $X$, $Y$ and $Z$ be Dedekind complete Riesz spaces. For  $A\in L^{r}(Y, Z)$ and $B\in L^{r}(W, X)$ let $M_{A,\,B}$ be the two-sided multiplication operator from $L^{r}(X, Y)$ into $L^r(W,\,Z)$ defined by $M_{A,\,B}(T)=ATB$. We show that for every  $0\leq A_0\in L^{r}_{n}(Y, Z)$, $|M_{A_0, B}|(T)=M_{A_0, |B|}(T)$ holds for all $B\in L^{r}(W, X)$ and all $T\in L^{r}_{n}(X, Y)$. Furthermore, if $W$, $X$, $Y$ and $Z$ are Dedekind complete Banach lattices such that $X$ and $Y$  have  order continuous norms, then  $|M_{A,\, B}|=M_{|A|, \,|B|}$ for all $ A\in L^{r}(Y, Z)$   and all $B\in L^{r}(W, X)$. Our results generalize the related results of Synnatzschke and Wickstead, respectively.

\end{abstract}

\maketitle \baselineskip 4.5mm
\section{Introduction}
\par For an algebra $\mathcal{A}$ an operator of the form $T\mapsto\sum_{i=1}^{n}A_{i}TB_{i}$, where $A_i$, $B_i$ are fixed in $\mathcal{A}$, is referred to as an \textit{elementary operator} on $\mathcal{A}$. If $A,\,B\in\mathcal{A}$, we denote by $M_{A,\,B}$ the operator $T\mapsto ATB$. The operator $M_{A,\,B}$ is called  a \textit{basic elementary operator} or a \textit{two-sided multiplication operator}. The literature related to (basic) elementary operators is by now very large, much of it in the setting of $C^*$-algebras or in the Banach space setting. In this direction there are many excellent surveys and expositions. See, e.g., \cite{Ara, Curto, Mathieu, Saksman}.

\par For the study of two-sided multiplication operators in the setting of Riesz spaces (i.e., vector lattices) we would like to mention  the work of  Synnatzschke \cite{Synnatz}. The set of all regular operators (order continuous regular operators, resp.) from a Riesz space $X$ into a Dedekind complete Riesz space $Y$ will be denoted by $L^{r}(X, Y)$  ($L^{r}_n(X, Y)$, resp.). When $Y=\mathbb{R}$, we write $X^{\sim}$ and $X^{\sim}_{n}$ respectively instead of $L^{r}(X, \mathbb{R})$ and $L^{r}_n(X, \mathbb{R})$. They are likewise Dedekind complete Riesz spaces. Let $W$, $X$, $Y$ and $Z$ be Dedekind complete Riesz spaces. For all $A\in L^r(Y,\,Z)$ and $B\in L^r(W,\,X)$, $M_{A,\,B}:T\in L^r(X,\,Y)\mapsto ATB\in L^r(W,\,Z)$ is a regular operator,  and hence  the modulus $|M_{A,\,B}|$ of $M_{A,\,B}$ exists in $ L^{r}\big(L^{r}(X, Y), L^{r}(W, Z)\big)$. It is interesting to know about the relationship of $|M_{A,\,B}|$ with $|A|$ and $|B|$. Among other things, Synnatzschke \cite[Satz 3.1]{Synnatz} proved the following result:\\[6pt]
\indent a) If $0\leq B_0\in L^r(W,\,X)$, then $|M_{A,\,B_0}|=M_{|A|,\,B_0}$, $M_{A,\,B_0}\vee\, M_{C,\,B_0}=M_{A\vee C,\,B_0}$ hold for all $A,\,C\in L^r(Y,\,Z)$. \\[6pt]
\indent b) If $0\leq A_0\in L^{r}_n(Y,\,Z)$ and $Y^{\sim}_{n},\,Z^{\,\sim}_{n}$ are total, then  we have $|M_{A_0, B}|(T)=M_{A_0, |B|}(T)$ and $(M_{A_0,\,B}\vee M_{A_0,\,D}) (T)=M_{A_0,\,B\vee D}(T)$ for all $B,\,D\in L^{r}(W, X)$ and all $T\in L^{r}_{n}(X, Y)$.
\par \noindent Hereby  $Y^{\sim}_{n}$ is\textit{ total} if $Y^{\sim}_{n}$ separates the points of $Y$. Synnatzschke uesd (a) to establish (b) by taking adjoints of operators. For his purpose, the hypothesis of both $Y^{\sim}_{n}$ and $Z^{\,\sim}_{n}$ being total is essential.

\par Recently, Wickstead \cite{Wickstead} showed that if $E$ is an atomic Banach lattice with order continuous norm and $A,\,B\in L^r(E)$, then $|M_{A,\,B}|=M_{|A|,\,|B|}$ and $\|M_{A,\, B}\|_{r}=\|A\|_{r} \|B\|_r$. In his proofs he depended heavily upon the `atomic' condition.
\par In this note, we  generalize the related results of Synnatzschke and Wickstead, respectively.  We remove the condition of order continuous duals being total in \cite[Satz 3.1(b)]{Synnatz} and show that for every  $0\leq A_0\in L^{r}_{n}(Y, Z)$, $|M_{A_0, B}|(T)=M_{A_0, |B|}(T)$ holds for all $B\in L^{r}(W, X)$ and all $T\in L^{r}_{n}(X, Y)$. Furthermore, if $W$, $X$, $Y$ and $Z$ are Dedekind complete Banach lattices such that $X$ and $Y$  have  order continuous norms (not necessarily atomic), then  $|M_{A,\, B}|=M_{|A|, \,|B|}$ and $\|M_{A,\, B}\|_{r}=\|A\|_{r} \|B\|_r$ hold for all $ A\in L^{r}(Y, Z)$   and all $B\in L^{r}(W, X)$.

\par Our notions are standard. For the theory of Riesz spaces and regular operators, we refer the reader to the monographs \cite{AB, M, Za}.
 \section{The modulus of the two-sided multiplication operator}
   We start with two examples which serve to illustrate that the order continuous dual $X^{\sim}_{n}$ of a Dedekind complete Riesz space $X$ need not be total. This justifies our effort to generalize the result of Synnatzschke \cite[Satz 3.1 b)]{Synnatz}.
 \begin{example}(1) Let $(\Omega,\,\Sigma,\,\mu)$ be a nonatomic finite measure space. Then the Dedekind complete Riesz space $X=L_p(\Omega,\,\Sigma,\,\mu)$ ($0<p<1$) satisfies $X^{\sim}_{n}=X^{\sim}=\{0\}$. This result is due to M. M. Day. (cf. \cite[Theorem 5.24, p. 128]{AB2}).

 \par (2) Let $K$ be a compact Hausdorff space. It is well known that $C(K)$ is Dedekind complete if and only if $K$ is Stonian (i.e., extremally disconnected). A Hausdorff compact Stonian space $K$ such that $C(K)^{\sim}_{n}$ is total  is called hyper-Stonian. Dixmier gave a characterization of hyper-Stonian spaces: $K$ is hyper-Stonian if and only if $C(K)$ is isomorphic to a dual Banach lattice (cf. \cite[Theorem 2.1.7]{M}). He also gave a Dedekind complete $C(K)$-space which is not isomorphic to a dual space (see, e.g., \cite[p. 99, Problems 4.8 and 4.9]{Alb} for details). That is, such a $C(K)$ is  Dedekind complete, but $C(K)^{\sim}_{n}$ is not total.

 \end{example}
\begin{proposition}\label{Prop 2.1}
 Let $W$, $X$, $Y$ and $Z$ be  Riesz spaces with $X$, $Y$ and $Z$ Dedekind complete. Let $0\leq A_0\in L^{r}_{n}(Y, Z)$. Then we have $|M_{A_0, B}|(T)=M_{A_0, |B|}(T)$ and, equivalently, $M_{A_0,\,B}\vee M_{A_0,\,D} (T)=M_{A_0,\,B\vee D}(T)$ for all $B,\,D\in L^{r}(W, X)$ and all $T\in L^{r}_{n}(X, Y)$.
\end{proposition}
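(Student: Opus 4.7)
The plan is to reduce to $T \in L^{r}_{n}(X, Y)^+$ by linearity of both sides, then establish the stronger positive-part identity $(M_{A_0, B})^+(T) = M_{A_0, B^+}(T)$; applying it to $-B$ in place of $B$ yields the analogous identity for the negative part, and adding them gives $|M_{A_0, B}|(T) = M_{A_0, |B|}(T)$. The equivalent $\vee$-formulation then follows routinely from the Riesz space identity $U \vee V = \tfrac{1}{2}(U + V + |U - V|)$, combined with $B \vee D = \tfrac{1}{2}(B + D + |B-D|)$ and $M_{A_0, B} - M_{A_0, D} = M_{A_0, B - D}$.

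For the positive-part identity, apply the Riesz--Kantorovich formula twice. First, $(M_{A_0, B})^+(T) = \sup\{A_0 S B : 0 \leq S \leq T\}$ in the Dedekind complete Riesz space $L^{r}(W, Z)$; evaluating this supremum at $w \in W^+$ through the directed hull of finite joins of operators yields
\[
(M_{A_0, B})^+(T)(w) = \sup\Bigl\{\textstyle\sum_{i=1}^{n} A_0 S_i (B w_i) : 0 \leq S_i \leq T,\ w_i \in W^+,\ \sum w_i = w\Bigr\}.
\]
For each fixed decomposition $w = \sum w_i$, the inner suprema over the independent $S_i \in [0, T]$ separate: since $X$ is Dedekind complete, for every $y \in X$ the band projection $P_{y^+}$ onto the band generated by $y^+$ exists, and $S := T P_{y^+}$ lies in $[0, T]$ with $Sy = T y^+$; combined with the elementary bound $S y \leq T y^+$ valid for any $0 \leq S \leq T$, this gives $\sup\{A_0 S(B w_i) : 0 \leq S \leq T\} = A_0 T (B w_i)^+$. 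Therefore
\[
(M_{A_0, B})^+(T)(w) = \sup\Bigl\{A_0 T \textstyle\sum_{i} (B w_i)^+ : w_i \in W^+,\ \sum w_i = w\Bigr\}.
\]
The family $\bigl\{\sum_i (Bw_i)^+\bigr\}$ indexed by finite decompositions of $w$ is upward directed in $X$ (common refinements, available via the Riesz decomposition property, dominate both) and has supremum $B^+(w)$ by the alternative Riesz--Kantorovich representation of $B^+$. Since $T \in L^{r}_{n}(X, Y)^+$ and $A_0 \in L^{r}_{n}(Y, Z)^+$, the composition $A_0 T : X \to Z$ is positive and order continuous, and therefore commutes with this directed supremum, yielding $(M_{A_0, B})^+(T)(w) = A_0 T B^+(w) = M_{A_0, B^+}(T)(w)$.

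The principal obstacle is the non-commutativity of $A_0 T$ with the \emph{non-directed} supremum in the classical formula $B^+(w) = \sup_{0 \leq u \leq w} B u$; order continuity does not push through such a supremum in general. The key device is to instead invoke the sup-over-partitions representation of $B^+$, whose defining family is automatically directed and therefore compatible with the order continuity of both $T$ and $A_0$. This is precisely where the hypotheses $T \in L^{r}_{n}(X, Y)$ and $A_0 \in L^{r}_{n}(Y, Z)$ are essentially used, bypassing Synnatzschke's adjoint-based argument that required totality of $Y^{\sim}_{n}$ and $Z^{\sim}_{n}$.
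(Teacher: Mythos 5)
Your argument is correct, and it rests on the same three devices as the paper's proof --- the band-projection trick $S=TP_{(Bw_i)^+}$ (the paper uses $P_i$ and $Q_i=P_i-I$ to the same effect), the upward directed partition representation of $|B|w$ (you use the analogous one for $B^+w$), and order continuity of the composition $A_0T$ to pass it through that directed supremum --- but it is organized along a genuinely different line. The paper never computes $(M_{A_0,B})^+$: it notes that $|M_{A_0,B}|(T)\le M_{A_0,|B|}(T)$ is trivial and proves the reverse inequality by bounding each term $A_0T|Bw_i|$ by $|M_{A_0,B}|(T)(w_i)$ via the partition formula $|\Phi|(T)=\sup\{\sum_j|\Phi(T_j)|:\sum_j|T_j|=T\}$. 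You instead start from the Riesz--Kantorovich formula $(M_{A_0,B})^+(T)=\sup\{A_0SB:0\le S\le T\}$, evaluate that supremum at $w$ through the partition formula for suprema of families of operators, and observe that the inner supremum over $S\in[0,T]$ is actually attained at $S=TP_{(Bw_i)^+}$ with value $A_0T(Bw_i)^+$; this gives the positive-part identity $(M_{A_0,B})^+(T)=M_{A_0,B^+}(T)$ exactly, from which the modulus identity follows by applying it to $\pm B$. Your route is slightly more informative (it identifies the positive part itself, not only the modulus, and exhibits the supremum as attained) at the cost of invoking the evaluation-at-a-point formula for suprema of infinite, non-directed families of operators; the paper's two-inequality argument needs only the modulus formulas of \cite[Theorem 1.21]{AB}. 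Both proofs correctly isolate the order continuity of $T$ and $A_0$ as the ingredient that replaces Synnatzschke's totality hypothesis, and your deduction of the $\vee$-formulation from the modulus identity is routine and sound.
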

\begin{proof}
 For $B\in L^{r}(W, X)$ and $0\leq T\in L^{r}_n(X, Y)$, we  have to prove that $|M_{A_0, B}|(T)= M_{A_0, |B|}(T)$.  Clearly we have $|M_{A_0, B}|(T)\leq M_{A_0, |B|}(T)$, since $|M_{A_0, B}|\leq M_{A_0, |B|}$ holds in $L^{r}\big(L^{r}(X, Y), L^{r}(W, Z)\big)$. For the reverse inequality, let $w\in W^{+}$. By a formula for the modulus of regular operators in \cite[Theorem 1.21(3)]{AB} or \cite[Theorem 20.10(i)]{Za} we have$$\left(\sum_{i=1}^{n}|Bw_i|:n\in\mathbb{N},\, 0\leq w_{i}\in W,\, \sum_{i}w_i=w\right)\uparrow|B|w.$$Since $A_0$ and $T$ are both positive order continuous operators, $A_0T$ is likewise an  order continuous positive operator from $X$ into $Z$. It follows that
\begin{eqnarray*}
M_{A_{0},\,|B|}(T)(w)&=&A_{0}T|B|w\\
&=&\sup\left(\sum_{i=1}^{n}A_0T|Bw_i|:n\in\mathbb{N},\, 0\leq w_{i}\in W,\, \sum_{i}w_i=w\right)
\end{eqnarray*}
For each $1\leq i\leq n$, let $P_i$ be the order projection from $X$ onto the band generated by $(Bw_{i})^+$ in $X$ and let $Q_i=P_i-I$, where $I$ is the identity operator on $X$. Clearly, $$P_i\perp Q_i,\quad |P_i|+|Q_i|=I,\quad P_{i}Bw_i=(Bw_{i})^+,\quad (P_i+Q_i)Bw_i=|Bw_i|,$$and $$|TP_i|+|TQ_i|=T.$$Therefore, for each $i$ we have
\begin{eqnarray*}
A_0T|Bw_i|&=&(A_0TP_i+A_0TQ_i)Bw_i\\
&\leq&\big(|A_0(TP_i)B|+|A_0(TQ_i)B|\big)w_i\\
&\leq&\left(\sup\Bigg\{\sum_{j=1}^{m}|A_0T_jB|:m\in\mathbb{N},\, T_j\in L^{r}(X,Y),\, \sum_{j}|T_j|=T\Bigg\}\right)w_i\\
&=&\left(\sup\Bigg\{\sum_{j=1}^{m}|M_{A_{0},\,B}(T_j)|:m\in\mathbb{N},\, T_j\in L^{r}(X,Y),\, \sum_{j}|T_j|=T\Bigg\}\right)w_i\\
&=&|M_{A_{0},B}|(T)(w_i).
\end{eqnarray*}
Hence, from this it follows that
\begin{eqnarray*}
M_{A_{0},\,|B|}(T)(w)&=&\sup\left(\sum_{i=1}^{n}A_0T|Bw_i|:n\in\mathbb{N},\, 0\leq w_{i}\in W,\, \sum_{i}w_i=w\right)\\
&\leq&\sup\left(\sum_{i=1}^{n}|M_{A_{0},B}|(T)(w_i):n\in\mathbb{N},\, 0\leq w_{i}\in W,\, \sum_{i}w_i=w\right)\\
&=&|M_{A_{0},B}|(T)(w),
\end{eqnarray*}
which implies that $|M_{A_0, B}|(T)\leq M_{A_0, |B|}(T)$ for all $B\in L^{r}(W, X)$ and all $0\leq T\in L^{r}_{n}(X, Y)$.
\end{proof}
In general we can not expect that  $|M_{A_0, B}|=M_{A_0, |B|}$  holds for all $B\in L^{r}(W, X)$. That is, the linear operator $M_{A_0,\, \cdot}:B\in L^{r}(W, X)\rightarrow L^{r}\big(L^{r}(X, Y), L^{r}(W, Z)\big)$ is not necessarily a Riesz homomorphism. In the last section we give a counterexample to illustrate this. However, for Banach lattices with order continuous norms the situation is quite different. The next result is a consequence of the above proposition and the earlier result of Synnatzschke \cite[Satz 3.1]{Synnatz}, which generalizes Theorem 3.1 of Wickstead \cite{Wickstead} recently obtained for atomic Banach lattices with order continuous norms.
\begin{corollary}\label{Corollary 2.2}
Let $W$, $X$, $Y$ and $Z$ be Banach lattices such that $X$, $Y$  have  order continuous norms and $Z$ is Dedekind complete.  Then we have $|M_{A,\, B}|=M_{|A|, \,|B|}$ for all $ A\in L^{r}(Y, Z)$   and all $B\in L^{r}(W, X)$.
\end{corollary}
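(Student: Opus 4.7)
The plan is to combine Proposition~\ref{Prop 2.1} with the Synnatzschke identity $|M_{A,B_0}| = M_{|A|,B_0}$ (valid for $B_0 \geq 0$) by exploiting the decomposition $B = B^+ - B^-$.

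First I would record that the order continuity hypotheses remove the restrictions present in Proposition~\ref{Prop 2.1}. A Banach lattice with order continuous norm is Dedekind complete, and any positive operator whose domain has order continuous norm and whose codomain is Dedekind complete is order continuous: if $y_\alpha\downarrow 0$ then $\|y_\alpha\|\to 0$, so the decreasing positive image is norm-null and must have infimum $0$. It follows that $L^r(Y,Z)=L^r_n(Y,Z)$ and $L^r(X,Y)=L^r_n(X,Y)$, so Proposition~\ref{Prop 2.1} becomes the unrestricted identity $|M_{A_0,B}| = M_{A_0,|B|}$ in $L^r(L^r(X,Y),L^r(W,Z))$ for every $0\le A_0 \in L^r(Y,Z)$ and every $B \in L^r(W,X)$.

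Applying this with $A_0 = |A|$ and decomposing $M_{|A|,B} = M_{|A|,B^+} - M_{|A|,B^-}$, both summands are positive in $L^r(L^r(X,Y),L^r(W,Z))$ and
\[
|M_{|A|,B^+} - M_{|A|,B^-}| = |M_{|A|,B}| = M_{|A|,|B|} = M_{|A|,B^+} + M_{|A|,B^-}.
\]
The standard Riesz-space fact $|f-g| = f + g \Longleftrightarrow f\wedge g = 0$ for positive $f,g$ then gives $M_{|A|,B^+} \wedge M_{|A|,B^-} = 0$. Synnatzschke's identity applied twice, with $B_0 = B^+$ and with $B_0 = B^-$, yields $|M_{A,B^\pm}| = M_{|A|,B^\pm}$, so the disjointness transfers to $|M_{A,B^+}| \wedge |M_{A,B^-}| = 0$, i.e.\ $M_{A,B^+} \perp M_{A,B^-}$.

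Since disjoint elements of a Riesz space satisfy $|f-g| = |f| + |g|$, linearity of $M_{A,\cdot}$ in the second argument finishes the proof:
\[
|M_{A,B}| = |M_{A,B^+} - M_{A,B^-}| = |M_{A,B^+}| + |M_{A,B^-}| = M_{|A|,B^+} + M_{|A|,B^-} = M_{|A|,|B|}.
\]
No step is technically delicate; the substantive observation is that Proposition~\ref{Prop 2.1} applied to the positive operator $|A|$ encodes precisely the disjointness of $M_{|A|,B^+}$ and $M_{|A|,B^-}$, and this disjointness can then be transferred through Synnatzschke's identity to the non-positive operators $M_{A,B^\pm}$. The only preparatory care needed is the verification in the first paragraph that the order continuous norms on $X$ and $Y$ put the relevant operators in the order continuous class.
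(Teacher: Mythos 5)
Your proof is correct and follows essentially the same route as the paper: both arguments combine Proposition~\ref{Prop 2.1} (upgraded to an operator identity because order continuity of the norms on $X$ and $Y$ makes every regular operator order continuous, so $L^r=L^r_n$ in the relevant places) with Synnatzschke's identity $|M_{A,\,B_0}|=M_{|A|,\,B_0}$ for $B_0\ge 0$, and both hinge on disjointness of the terms coming from the positive/negative parts. The only organizational difference is that the paper decomposes both $A$ and $B$ and verifies that the four terms $\mathcal{M}(A^{\pm},B^{\pm})$ are pairwise disjoint, whereas you decompose only $B$ and transfer the disjointness of $M_{|A|,\,B^{+}}$ and $M_{|A|,\,B^{-}}$ to $M_{A,\,B^{\pm}}$ through Synnatzschke's identity.
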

\begin{proof}
Let $\mathcal{M}:L^{r}(Y, Z)\times L^{r}(W, X)\rightarrow L^{r}\big(L^{r}(X, Y), L^{r}(W, Z)\big)$ be the bilinear operator defined via $\mathcal{M}(A, B)=M_{A,\,B}$. Clearly, $\mathcal{M}$ is positive.
Since $X$ and $Y$  are Banach lattices with  order continuous norms, we have $L^{r}_{n}(X,\, Y)=L^{r}(X,\, Y)$ and $L^{r}_{n}(Y,\, Z)=L^{r}(Y,\, Z)$. From  Proposition \ref{Prop 2.1} above and the result of Synnatzschke \cite[Satz 3.1]{Synnatz} it follows that for every $0\leq A_0\in L^{r}(Y, Z)$ and every $0\leq B_0\in L^{r}(W, X) $, $\mathcal{M}(A_0,\,\cdot)$ and $\mathcal{M}(\cdot\,,B_0)$ are both Riesz homomorphisms. Hence, for all $A\in L^{r}(Y, Z)$ and all $B\in L^{r}(W, X)$ we have
\begin{eqnarray*}
|M_{A, B}|&=&|\mathcal{M}(A, B)|\\&=&|\mathcal{M}(A^{+}-A^{-}, B^{+}-B^{-})|\\
&=&|\mathcal{M}(A^+,\,B^+)-\mathcal{M}(A^+,\,B^-)-\mathcal{M}(A^-,\,B^+)+\mathcal{M}(A^-,\,B^-)|\\
&=&\mathcal{M}(A^+,\,B^+)+\mathcal{M}(A^+,\,B^-)+\mathcal{M}(A^-,\,B^+)+\mathcal{M}(A^-,\,B^-)\\
&=&\mathcal{M}(|A|,\,|B|)=M_{|A|,\,|B|}.
\end{eqnarray*}
Here we are using the fact that the terms $\mathcal{M}(A^+,\,B^+)$, $\mathcal{M}(A^+,\,B^-)$, $\mathcal{M}(A^-,\,B^+)$ and $\mathcal{M}(A^-,\,B^-)$ are pairwise disjoint.
\end{proof}
Let $W$ and $X$ be Banach lattices with $X$ Dedekind complete. Recall that $L^{r}(W,\,X)$ is a Dedekind complete Banach lattice under the regular norm $\|B\|_r:=\||B|\|$ for every $B\in L^{r}(W,\,X)$. Note that $M_{A,\,B}$ is a regular operator from $L^{r}(X, Y)$ into $L^{r}(W, Z)$.
The following result deals with the regular norms of two-sided multiplication operators. Its proof is based on Corollary \ref{Corollary 2.2}
\begin{corollary}\label{Corollary 2.3}
If $W$, $X$, $Y$ and $Z$ be Banach lattices such that $X$, $Y$  have  order continuous norms and $Z$ is Dedekind complete, then $\|M_{A,\, B}\|_{r}=\|A\|_{r} \|B\|_r$ for all $ A\in L^{r}(Y, Z)$   and all $B\in L^{r}(W, X)$.
\end{corollary}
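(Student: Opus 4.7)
The plan is to reduce everything to the case of positive $A$ and $B$ via Corollary \ref{Corollary 2.2}, and then prove that case directly by sandwiching $\|M_{A_0,B_0}\|_r$ between the obvious submultiplicative upper bound and a matching lower bound coming from a suitable rank-one test operator.

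First I would observe that since $M_{A,B}$ is a regular operator between the Dedekind complete Banach lattices $L^r(X,Y)$ and $L^r(W,Z)$, its regular norm satisfies $\|M_{A,B}\|_r = \||M_{A,B}|\|$, where the outer norm is the operator norm. By Corollary \ref{Corollary 2.2} the modulus equals $M_{|A|,|B|}$, which is a positive operator, and for positive operators the regular norm coincides with the operator norm. Hence
\[
\|M_{A,B}\|_r \;=\; \|M_{|A|,|B|}\|_r,
\]
and it suffices to prove the identity $\|M_{A_0,B_0}\|_r = \|A_0\|_r\|B_0\|_r$ for $0\le A_0\in L^r(Y,Z)$ and $0\le B_0\in L^r(W,X)$.

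For the upper bound I would use the standard pointwise inequality $|A_0 T B_0|\le A_0|T|B_0$, valid because $A_0,B_0\ge 0$, together with monotonicity of the norm on the positive cone and ordinary submultiplicativity of the operator norm:
\[
\|A_0 T B_0\|_r \;=\; \||A_0 T B_0|\| \;\le\; \|A_0|T|B_0\| \;\le\; \|A_0\|\,\||T|\|\,\|B_0\| \;=\; \|A_0\|_r\,\|T\|_r\,\|B_0\|_r,
\]
which immediately gives $\|M_{A_0,B_0}\|_r\le\|A_0\|_r\|B_0\|_r$. For the matching lower bound, given $\varepsilon>0$ I would pick $y\in Y^+$ with $\|y\|\le 1$ and $\|A_0 y\|\ge\|A_0\|-\varepsilon$, then $w\in W^+$ with $\|w\|\le 1$ and $\|B_0 w\|\ge\|B_0\|-\varepsilon$, and then a positive $\varphi\in X^*$ with $\|\varphi\|\le 1$ and $\varphi(B_0 w)=\|B_0 w\|$ (such $\varphi$ exists by Hahn-Banach followed by taking modulus, as every continuous functional on a Banach lattice is regular with $\||\varphi|\|=\|\varphi\|$). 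The rank-one operator $T(x)=\varphi(x)y$ is positive, satisfies $\|T\|_r=\|\varphi\|\,\|y\|\le 1$, and
\[
\|A_0 T B_0\|_r \;\ge\; \|(A_0 T B_0)w\| \;=\; \varphi(B_0 w)\,\|A_0 y\| \;\ge\; (\|B_0\|-\varepsilon)(\|A_0\|-\varepsilon).
\]
Letting $\varepsilon\to 0$ yields $\|M_{A_0,B_0}\|_r\ge \|A_0\|\,\|B_0\|=\|A_0\|_r\|B_0\|_r$.

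The main obstacle I anticipate is the choice of the rank-one test operator $T$; one must be sure the positive functional $\varphi$ truly lives in the regular dual and has the right norm, so that $\|T\|_r\le 1$. Order continuity of the norms on $X,Y$ is not directly needed for this estimate, but it is what makes Corollary \ref{Corollary 2.2} applicable and thereby legitimises the reduction to the positive case in the opening step.
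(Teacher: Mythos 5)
Your proof is correct and follows essentially the same route as the paper's: the paper likewise reduces the general case to positive $A$, $B$ via Corollary \ref{Corollary 2.2}, obtains the upper bound from submultiplicativity of the operator norm on positive operators, and obtains the matching lower bound by testing $M_{A,B}$ against positive rank-one operators $x'\otimes y$ with $0\le x'\in B_{X'}$, $0\le y\in B_Y$ (writing $M_{A,B}(x'\otimes y)=(B'x')\otimes Ay$ and taking a supremum where you run an explicit $\varepsilon$-argument). The differences are purely cosmetic.
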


\begin{proof}
We first assume that $ 0\leq A\in L^{r}(Y, Z)$ and $0\leq B\in L^{r}(W, X)$. Since $M_{A,\,B}\geq0$,  we have $\|M_{A,\,B}\|_r=\|M_{A,\,B}\|\leq\|A\|\|B\|=\|A\|_r\|B\|_r$. On the other hand, for every $0\leq x^{\prime}\in X^{\prime}$ and every $0\leq y\in Y$ satisfying $\|x^{\prime}\|\leq1$ and $\|y\|\leq1$,  $x^{\prime}\otimes y\in L^{r}(X,\,Y)$ and $\|x^{\prime}\otimes y\|_r=\|x^{\prime}\otimes y\|\leq1$. Then it follows that
\begin{eqnarray*}
\|M_{A,\,B}\|_r=\|M_{A,\,B}\|&\geq&\sup\Big(\|M_{A,\,B}(x^{\prime}\otimes y)\|:0\leq x^{\prime}\in B_{X^{\prime}}, 0\leq y\in B_Y\Big)\\
&=&\sup\Big(\|(B^{\prime}x^\prime)\otimes Ay\|:0\leq x^{\prime}\in B_{X^{\prime}}, 0\leq y\in B_Y\Big)\\
&=&\|A\|\|B\|=\|A\|_r\|B\|_r.
\end{eqnarray*}
This implies that $\|M_{A,\,B}\|_r=\|A\|_r\|B\|_r$ holds for all $ 0\leq A\in L^{r}(Y, Z)$ and $0\leq B\in L^{r}(W, X)$.
\par Now, for the general case let  $A\in L^{r}(Y, Z)$ and $B\in L^{r}(W, X)$ be arbitrary. Then by Corollary \ref{Corollary 2.2}   we have
\begin{eqnarray*}
\|M_{A,\,B}\|_r=\||M_{A,\,B}|\|=\|M_{|A|,\,|B|}\|=\|M_{|A|,\,|B|}\|_r=\|A\|_r\|B\|_r.
\end{eqnarray*}
\end{proof}

Wickstead \cite{Wickstead} establishes that even in the case of atomic Banach lattices with order continuous norms  the operator norm of two-sided multiplication operators need not be equivalent to the regular norm.
\section{A Counterexample}
\par Let $X$ and $Y$ be Riesz spaces with $Y$ Dedekind complete. The set of all $\sigma$-order continuous operators in $L^{r}(X, Y)$ will be denoted by $L^{r}_c(X, Y)$.  The disjoint complement $(L^{r}_c(X, Y))^{d}$ of $L^{r}_c(X, Y)$ is denoted by $L^{r}_s(X, Y)$. Every element of $L^{r}_s(X, Y)$ is called a singular operator. When $Y=\mathbb{R}$, we write $X^{\sim}$ and $X^{\sim}_{s}$ respectively instead of $L^{r}(X, \mathbb{R})$ and $L^{r}_s(X, \mathbb{R})$. The following example illustrates that $|M_{A_0, B}|=M_{A_0, |B|}$ does not necessarily hold for all $B\in L^{r}(W, X)$,  that is, the linear operator $M_{A_0,\, \cdot}:B\in L^{r}(W, X)\rightarrow L^{r}\big(L^{r}(X, Y), L^{r}(W, Z)\big)$ is not necessarily a Riesz homomorphism in general.
\begin{example}  Let $W=X=Y=Z=\ell_{\infty}$ and let $e$ denote the strong unit $(1, 1,\cdots)$ of $\ell_{\infty}$. Let $0\le f\in (\ell_{\infty})^{\sim}_{s}$ be a singular Riesz homomorphism with $f(e)=1$ (one can take, e.g., $f$ equal to a limit over  a free ultrafilter). Let $B\in L^{r}(\ell_{\infty})$ be the rank one operator $B=f \otimes e$. Then it is clear that $B\in L^{r}_s(\ell_{\infty})$ and $I\wedge B=0$, where $I$ is the identity operator on $\ell_{\infty}$ (and hence order continuous). We claim that $M_{I,\,I}\wedge M_{I,\,B}\neq M_{I,\,I\wedge B}=0$.  To this end,
let $0\le T\in L^{r}(\ell_{\infty})$. Then, by \cite[Theorem 1.21(2)]{AB} we have
\begin{eqnarray*}
  \Bigg\{\sum\limits_{i=1}^{n}(T_{i}\wedge T_{i}B):n\in\mathbb{N}, T_{i}\ge 0, \sum\limits_{i} T_{i}=T \Bigg\}\downarrow (M_{I,\,I}\wedge M_{I,\,B})(T).
\end{eqnarray*}
From this  and \cite[Theorem 1.51(2)]{AB} it follows that
$$
\begin{array}{lc}
(M_{I,\,I}\wedge M_{I,\,B})(T)(e)\\[12pt]
\qquad\quad=\inf\Bigg\{\sum\limits_{i=1}^{n}(T_{i}\wedge T_{i}B)(e): n\in\mathbb{N}, T_{i}\ge 0, \sum\limits_{i} T_{i}=T \Bigg\}\\[12pt]
\qquad\quad=\inf\Bigg\{\sum\limits_{i=1}^{n}(T_{i}\wedge (f\otimes T_{i}e))(e): n\in\mathbb{N}, T_{i}\ge 0, \sum\limits_{i} T_{i}=T\Bigg\}\\[12pt]
\qquad\quad=\inf\Bigg\{\sum\limits_{i=1}^{n}\inf\Bigg(\sum\limits_{j=1}^{m_i}T_{i}x^{(i)}_{j}\wedge f(x^{(i)}_{j})T_{i}e:x^{(i)}_{j}\wedge x^{(i)}_{k}=0, j\neq k,\sum\limits_{j=1}^{m_i}x^{(i)}_{j}=e\Bigg): \\[12pt] \qquad\qquad\qquad\qquad\qquad\qquad\qquad\qquad\qquad\qquad\qquad\quad n\in\mathbb{N}, T_{i}\ge 0, \sum\limits_{i} T_{i}=T\Bigg\}.
\end{array}
$$
Let us put
 $$
 \begin{array}{l}
 G^{\,\prime}=\Bigg\{\sum\limits_{i=1}^{n}\inf\Bigg(\sum\limits_{j=1}^{m_i}T_{i}x^{(i)}_{j}\wedge f(x^{(i)}_{j})T_{i}e:x^{(i)}_{j}\wedge x^{(i)}_{k}=0, j\neq k,\sum\limits_{j=1}^{m_i}x^{(i)}_{j}=e\Bigg): \\[12pt]
  \qquad\qquad\qquad\qquad\qquad\qquad\qquad\qquad\qquad\qquad\quad n\in\mathbb{N}, T_{i}\ge 0, \sum\limits_{i} T_{i}=T \Bigg\},
 \end{array}
 $$
 $$
 \begin{array}{l}
 G^{\,\prime\prime}=\Bigg\{\sum\limits_{i=1}^{n}\sum\limits_{j=1}^{m}T_{i}x_{j}\wedge f(x_{j})T_{i}e:m\in\mathbb{N}, x_{j}\wedge x_{k}=0, j\neq k,\sum\limits_{j=1}^{m}x_{j}=e, \\[12pt]
  \qquad\qquad\qquad\qquad\qquad\qquad\qquad\qquad\qquad\qquad n\in\mathbb{N}, T_{i}\ge 0, \sum\limits_{i} T_{i}=T \Bigg\}.
 \end{array}
 $$
 We claim that $\inf G^{\,\prime}=\inf G^{\,\prime\prime}$. Indeed, it is clear that  $\inf G^{\,\prime}\leq\inf G^{\,\prime\prime}$. For the reverse inequality,
let $(T_i)^{n}_{1}$ be a fixed positive partition of $T$ (i.e., $ T_{i}\ge 0$, $\sum_{i} T_{i}=T$). For each $i$
let $(x^{(i)}_j)_{j=1}^{m_i}$ be an arbitrary positive disjoint partition of $e$ (i.e., $x^{(i)}_{j}\wedge x^{(i)}_{k}=0, j\neq k,\sum_{j=1}^{m_i}x^{(i)}_{j}=e$) corresponding to $T_i$. Following the proof of \cite[Theorem 1.51]{AB} we can find a  positive disjoint partition $(x_j)^{m}_1$ of $e$ such that
$$\sum\limits_{j=1}^{m}T_{i}x_{j}\wedge f(x_{j})T_{i}e\leq\sum\limits_{j=1}^{m_i}T_{i}x^{(i)}_{j}\wedge f(x^{(i)}_{j})T_{i}e  \qquad (i=1, 2,\cdot\cdot\cdot, n).$$
From this it follows that $$\inf G^{\,\prime\prime}\leq \sum\limits_{i=1}^{n}\sum\limits_{j=1}^{m}T_{i}x_{j}\wedge f(x_{j})T_{i}e\leq\sum\limits_{i=1}^{n}\sum\limits_{j=1}^{m_i}T_{i}x^{(i)}_{j}\wedge f(x^{(i)}_{j})T_{i}e $$Therefore, $$\inf G^{\,\prime\prime}\leq \sum\limits_{i=1}^{n}\inf\Bigg(\sum\limits_{j=1}^{m_i}T_{i}x^{(i)}_{j}\wedge f(x^{(i)}_{j})T_{i}e:x^{(i)}_{j}\wedge x^{(i)}_{k}=0, j\neq k,\sum\limits_{j=1}^{m_i}x^{(i)}_{j}=e\Bigg), $$which implies that $\inf G^{\,\prime\prime}\leq\inf G^{\,\prime}$.
Hence, we have $(M_{I,\,I}\wedge M_{I,\,B})(T)(e)=\inf G^{\,\prime\prime}$.
\par Since $f$ is a Riesz homomorphism, for every positive disjoint partition $(x_j)^{m}_1$ of $e$ appearing in $G^{\,\prime\prime}$ there exists only one, say $x_{j_0}$, such that $$f(x_j)=0, \quad j\neq j_0, \quad f(x_{j_0})=f(e)=1$$
 $$\quad\quad x_{j_0}\wedge\sum_{j\neq j_{0}}x_j=x_{j_0}\wedge(e-x_{j_0})=0.$$
It follows that $\sum_{i=1}^{n}\sum_{j=1}^{m}T_{i}x_{j}\wedge f(x_{j})T_{i}e= \sum_{i=1}^{n}T_{i}x_{j_0}.$ On the other hand, for any $x\in E^+$ satisfying $x\wedge(e-x)=0$ and $f(x)=1$, we must have $f(e-x)=0$, and hence $$\sum\limits_{i=1}^{n}T_{i}x=\sum_{i=1}^{n}\Big(T_{i}x\wedge f(x)T_{i}e+T_{i}(e-x)\wedge f(e-x)T_{i}e\Big)$$

Thus, we have $$\begin{array}{l}
(M_{I,\,I}\wedge M_{I,\,B})(T)(e)\\[10pt]
\qquad\qquad=\inf\Bigg\{\sum\limits_{i=1}^{n}\sum\limits_{j=1}^{m}T_{i}x_{j}\wedge f(x_{j})T_{i}e:m\in\mathbb{N}, x_{j}\wedge x_{k}=0, j\neq k,\sum\limits_{j=1}^{m}x_{j}=e, \\[12pt]
  \qquad\qquad\qquad\qquad\qquad\qquad\qquad\qquad\qquad\qquad\qquad\quad n\in\mathbb{N}, T_{i}\ge 0, \sum\limits_{i} T_{i}=T \Bigg\}\\[12pt]
\qquad\qquad=\inf\Bigg\{\sum\limits_{i=1}^{n}T_{i}x:x\wedge (e-x)=0, f(x)=1, n\in\mathbb{N}, T_{i}\ge 0, \sum\limits_{i} T_{i}=T\Bigg\}\\[12pt]
\qquad\qquad=\inf\Bigg\{Tx:0\le x\le e, x\wedge(e-x)=0, f(x)=1\Bigg\}.
\end{array}$$
If we now  take $T=B=f\otimes e$,  then $(M_{I,\,I}\wedge M_{I,\,B})(f\otimes e)(e)=e$. So, $M_{I,\,I}\wedge M_{I,\,B}\neq 0$.
\end{example}

\end{document}